\let\wfs@comment@comment\comment
\let\comment\@undefined
\let\wfs@changes@comment\comment
\let\comment\@undefined
\newcommand\comment{%
    \ifthenelse{\equal{\@currenvir}{comment}}
    {\wfs@comment@comment}
    {\wfs@changes@comment}%
}
\theoremstyle{plain}
\newtheorem{thm}{Theorem}[section]
\newtheorem{lem}[thm]{Lemma}
\newtheorem{defi}[thm]{Definition}
\newtheorem{cor}[thm]{Corollary}
\author{Ferdinando Zullo}
\title{Saturating linear sets in PG$(2,q^4)$}
\date{\today}
\newcommand{\F}{\mathbb F}
\newcommand{\fq}{{\mathbb F}_q}
\newcommand{\la}{\langle}
\newcommand{\ra}{\rangle}
\DeclareMathOperator{\PG}{PG}
\DeclareMathOperator{\C}{\mathcal{C}}
\theoremstyle{definition}
\newtheorem{definition}[thm]{Definition}
\newtheorem{rem}[thm]{Remark}
\newtheorem{ex}[thm]{Example}
\begin{document}

\maketitle

\begin{abstract}
Bonini, Borello and Byrne started the study of saturating linear sets in Desarguesian projective spaces, in connection with the covering problem in the rank metric. In this paper we study \emph{$1$-saturating} linear sets in PG$(2,q^4)$, that is $\fq$-linear sets in PG$(2,q^4)$ with the property that their secant lines cover the entire plane. By making use of a characterization of generalized Gabidulin codes, we prove that the rank of such a linear set is at least $5$.
This answers to a recent question posed by Bartoli, Borello and Marino.
\end{abstract}

\section*{Introduction}

Consider an Hamming-metric linear code $\mathcal{C}$ in $\fq^n$. $\mathcal{C}$ is called \textbf{$\rho$-covering} if for any element $y$ in $\fq^n$ there exists at least one codeword $c \in \C$ such that $d(y,c)\leq \rho$. The \textbf{covering radius} of $\C$ is the smallest value $\rho$ for which $\C$ turns out to be a $\rho$-covering code. To determine this value is a very difficult problem in general and indeed a classical problem in coding theory is the \emph{covering problem}, that is the determination of the smallest the minimal size of a $\rho$-covering code of length $n$. This problem has been deeply investigated during the last years, see \cite{book,DMP03,Den22,Zoli} for some references.
One of the main tool used to study the covering problem has been the geometric interpretation of the covering radius. Indeed, equivalence classes of $(\rho+1)$-covering codes of dimension $k$ are in one-to-one correspondence with equivalence classes of $\rho$-saturating sets in PG$(k-1,q)$.
A \textbf{$\rho$-saturating set} is a pointset $S$ in PG$(k-1,q)$ such that every point of PG$(k-1,q)$ lies in at least one subspace generated by $\rho+1$ points of $S$. 
There is a notable paucity of literature exploring the rank-metric, with only a few papers appearing on this subject; see 
\cite{BBM23,BoBM23,ByRav17,GY08}. In \cite{BoBM23}, Bonini, Borello and Byrne provide a geometric description of covering codes in terms of saturating linear sets, by making use of the correspondence shown in \cite{Ra} and in \cite{ABNR22}.
An $\fq$-linear set $L_U \subseteq \mathrm{PG}(k-1,q^m)$ is said to be \textbf{$\rho$-saturating} if any point in $\mathrm{PG}(k-1,q^m)$ belongs to at least one subspaces of dimension $\rho$ spanned by $\rho+1$ points of $L_U$. 
The main problem is the determination of the smallest rank of a $\rho$-saturating $\fq$-linear set in PG$(k-1,q^m)$.
This value is denoted by $s_{q^m/q}(k,\rho)$. 
In \cite{BoBM23} the following lower bounds have been provided 
\begin{equation}\label{eq:boundsons}
s_{q^m/q}(k,\rho)\geq
    \begin{cases}
        \left\lfloor \frac{mk}{\rho} \right\rfloor -m+\rho & \text{if } q>2,\\
        \\
        \left\lfloor \frac{mk-1}{\rho} \right\rfloor -m+\rho & \text{if } q=2 \text{ and } \rho>1,\\
        \\
        m(k-1)+1 & \text{if } q=2 \text{ and } \rho=1.\\
    \end{cases}
\end{equation}
By making use of some subgeometries configurations, in \cite{BBM23,BoBM23} it has been proved that the first bound is tight when $\rho$ divides $k$. A careful study of scattered linear sets with respect to subspaces, in \cite{BBM23} Bartoli, Borello and Marino have provided more bounds; see \cite[Section 2]{BBM23} and \cite[Remark 4.1]{BBM23} for a complete list of the known values of $s_{q^m/q}(k,\rho)$. 
In \cite[Section 3]{BBM23} the authors point out that
\[ 4 \leq s_{q^4/q}(3,2)\leq 5,\]
and, with a meticulous analysis of certain algebraic varieties, they prove that when $q$ is large and even $s_{q^4/q}(3,2)= 5$. 
In \cite[Question 4.2]{BBM23}, they ask whether or not $s_{q^4/q}(3,2)= 5$ in the remaining cases.
In this paper we answer to this question with a different approach, proving that $s_{q^4/q}(3,2)= 5$ for any prime power $q$. Finally, we also characterize the linear sets of minimum rank giving saturating linear sets in PG$(2,q^4)$.

The paper is organized as follows. 
Section \ref{s:preliminaries} is devoted to recall some results on linear sets, rank-metric codes and linearized polynomials we will need to prove the main result.
In Section \ref{sec:gab} we recall the construction of generalized Gabidulin codes and some characterization results, for which we will give a more geometric interpretation. In the last section, Section \ref{sec:main}, we prove the main result of this paper.

\section{Preliminaries}\label{s:preliminaries}

Let $p$ be a prime and let $h$ be a positive integer. We fix $q=p^h$ and denote by $\fq$ the finite field with $q$ elements. Let $V$ be an $\fq$-vector space, $\langle U \rangle_{\F_{q}}$ denotes the $\fq$-span of $U$, with $U$ a subset of $V$. Moreover, $\PG(k-1,q)$ denotes the projective Desarguesian space of dimension $k-1$ and order $q$ and $\PG(V,\fq)$ denotes the projective space obtained by $V$. Finally, $\mathrm{GL}(k,q)$ denotes the general linear group.

\subsection{Linear sets}

Let $V$ be a $k$-dimensional vector space over $\F_{q^m}$ and let $\Lambda=\PG(V,\F_{q^m})=\PG(k-1,q^m)$.
Recall that, if $U$ is an $\fq$-subspace of $V$ of dimension $n$, then the set of points
\[ L_U=\{\la { u} \ra_{\mathbb{F}_{q^m}} : { u}\in U\setminus \{{ 0} \}\}\subseteq \Lambda \]
is said to be an $\fq$-\textbf{linear set of rank $n$}.

\begin{defi}
Let $\Omega=\PG(W,\F_{q^m})$ be a projective subspace of $\Lambda$. The \textbf{weight of $\Omega$} in $L_U$ is defined as 
\[ w_{L_U}(\Omega)=\dim_{\fq}(U\cap W). \]
\end{defi}

For any $i \in \{0,\ldots,n\}$, denote by $N_i$ the number of points of $\Lambda$ of weight $i$ with respect to $L_U$, then
they satisfy the following relations, when $L_U \ne \Lambda$:
\begin{equation}\label{eq:card}
    |L_U| \leq \frac{q^n-1}{q-1},
\end{equation}
\begin{equation}\label{eq:pesicard}
    |L_U| =N_1+\ldots+N_n,
\end{equation}
\begin{equation}\label{eq:pesivett}
    N_1+N_2(q+1)+\ldots+N_n(q^{n-1}+\ldots+q+1)=q^{n-1}+\ldots+q+1,
\end{equation}
\begin{equation}\label{eq:size}
    \text{if } L_U\ne \emptyset \text{ then } |L_U|\equiv 1 \pmod{q}.
\end{equation}
Moreover, the following holds
\begin{equation}\label{eq:wpointsrank}
    w_{L_U}(P)+w_{L_U}(Q)\leq n,
\end{equation}
for any $P,Q \in \mathrm{PG}(k-1,q^m)$ with $P\ne Q$.

Furthermore, $L_U$ and $U$ are called \textbf{scattered} if $L_U$ has the maximum number $\frac{q^n-1}{q-1}$ of points, or equivalently, if all points of $L_U$ have weight one. 
Scattered linear sets, and more generally scattered spaces, have been introduced in \cite{BL2000} and some generalizations have been investigated. Indeed, the following definition was given in \cite{CsMPZ}, extending those in \cite{Lunardon2017,JohnGeertrui}.
An $\F_q$-linear set $L_U$ of $V$ is called $h$-\textbf{scattered} (or scattered w.r.t.\ the $(h-1)$-dimensional projective subspaces) if $\la U \ra_{\F_{q^m}}=V$ and the weight of any projective subspace of dimension $h-1$ is at most $h$.

Clearly, for a non-empty linear set $L_U$, \eqref{eq:size} can be improved if some assumptions are added. 

\begin{thm}[{\cite[Theorem 1.2]{DeBeuleVdV}} and {\cite[Lemma 2.2]{BoPol}}]\label{thm:sizelinset}
 If $L_U$ is an $\fq$-linear set of rank $n$, with $1 < n \leq m$ on $\PG(1,q^m)$, and $L_U$ contains at least one point of weight $1$, then $|L_U| \geq q^{n-1} + 1$.
\end{thm}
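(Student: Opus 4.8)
The plan is to reduce the statement to a clean incidence problem about sublines through the weight-one point, and then attack the residual count by using the algebra of $U$. First I would normalize: up to a projectivity we may take the weight-one point to be $P=\la(1,0)\ra_{\F_{q^m}}$. Since $n>1$ and $P$ has weight one, $U$ cannot be contained in a single $\F_{q^m}$-line, so $\la U\ra_{\F_{q^m}}=V$; moreover $U\cap\la(1,0)\ra_{\F_{q^m}}$ is one-dimensional over $\fq$, so after rescaling $(1,0)\in U$ and $U\cap\la(1,0)\ra_{\F_{q^m}}=\la(1,0)\ra_{\fq}$. Writing $\pi_2$ for the projection on the second coordinate and $W=\pi_2(U)$, the restriction $\pi_2|_U$ has kernel $\la(1,0)\ra_\fq$, so $\dim_\fq W=n-1$, and choosing an $\fq$-linear section produces an $\fq$-linear map $h\colon W\to\F_{q^m}$ with $U=\la(1,0)\ra_\fq\oplus\{(h(y),y):y\in W\}$.

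Next I would translate the count. Identifying the affine points of the line with $\F_{q^m}$ and $P$ with the point at infinity, the points of $L_U$ other than $P$ are exactly the elements of $X=\{(h(y)+s)/y:\ y\in W\setminus\{0\},\ s\in\fq\}$, so it suffices to prove $|X|\ge q^{n-1}$. For a fixed direction $d=\la y_0\ra_\fq\in\PG(W,\fq)$ the contribution $\{(h(y_0)+s)/y_0:s\in\fq\}$ is the affine $\fq$-line through $h(y_0)/y_0$ with direction $\la y_0^{-1}\ra_\fq$; it comes from the rank-two subspace $\la(1,0),(h(y_0),y_0)\ra_\fq$, which still carries $P$ with weight one and hence is a subline of $L_U$ through $P$. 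Thus $X$ is the union of the $\tfrac{q^{n-1}-1}{q-1}$ sublines of $L_U$ through $P$ (with $P$ deleted); since inversion is injective on $\PG(W,\fq)$ and two distinct sublines through $P$ share at most one further point, these affine lines have pairwise distinct directions and meet pairwise in at most one point.

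The heart of the argument is the bound $|X|\ge q^{n-1}$ for this pencil, and this is where I expect the real difficulty to lie: the inequality is false at the level of pure incidence combinatorics. Indeed $\tfrac{q^{n-1}-1}{q-1}$ affine lines of size $q$ meeting pairwise in at most one point can have a union far below $q^{n-1}$, and even \eqref{eq:wpointsrank} together with \eqref{eq:pesivett} only forces $|X|$ to be of order $q^{n/2}$. The bound must therefore exploit the precise structure of $U$: the direction set $\{\la y^{-1}\ra_\fq:y\in W\setminus\{0\}\}$ is the image under inversion of the projective space of a single $\fq$-subspace $W$, and the translates are governed by the $\fq$-linear map $h$. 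My plan for the core is induction on $n$: peeling off a hyperplane $W_0<W$ gives a rank-$(n-1)$ linear set $L_{U_0}$ still having $P$ of weight one, to which induction applies, and it remains to show that the $q^{n-2}$ new sublines contribute at least $q^{n-2}(q-1)$ genuinely new points.

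Controlling the overlap of each new subline with $L_{U_0}$ is the crux, and the main obstacle. I would encode a coincidence $(h(y)+s)/y=(h(y')+s')/y'$ as the $\fq$-linear relation $h(y)y'-h(y')y=s'y-sy'$ and bound its solutions using the weight constraint $w_{L_U}(P)+w_{L_U}(Q)\le n$, which caps the weight of every point $Q\neq P$ at $n-1$ and so prevents a new subline from collapsing into the old configuration. Tracking the equality case throughout—each inequality being tight exactly when all the new sublines pass through one fixed point—would then force $L_U$ to be a club, i.e.\ one point of weight $n-1$ with all others of weight one, yielding both the bound $|L_U|\ge q^{n-1}+1$ and the characterization of the extremal linear sets.
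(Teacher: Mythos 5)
Note first that the paper contains no proof of this statement: it is imported verbatim from \cite{DeBeuleVdV} (Theorem 1.2) and \cite{BoPol} (Lemma 2.2), so there is no in-paper argument to compare with, and your proposal has to stand on its own as a proof of the theorem. As such, it has a genuine gap. Your reduction is correct and is the natural one: normalizing $P=\la (1,0)\ra_{\F_{q^m}}$, writing $U=\la (1,0)\ra_{\fq}\oplus\{(h(y),y):y\in W\}$ with $\dim_\fq W=n-1$, and identifying $L_U\setminus\{P\}$ with the union of the $\frac{q^{n-1}-1}{q-1}$ sublines through $P$, which have pairwise distinct directions and pairwise at most one further common point. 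You also diagnose correctly that pure incidence counting, even combined with \eqref{eq:wpointsrank} and \eqref{eq:pesivett}, only forces roughly $q^{n/2}$ points. But the inductive step you then propose --- that the $q^{n-2}$ sublines in directions outside a hyperplane $W_0<W$ contribute at least $q^{n-2}(q-1)$ genuinely new points --- is exactly the whole difficulty, and it is left as a plan rather than proved; worse, the tools you name cannot deliver it as stated. Already for $n=3$: the $q$ new sublines, if they pairwise met in distinct points, would cover only $q^2-\binom{q}{2}=\frac{q^2+q}{2}$ points, which is less than the required $q(q-1)$ for every $q\ge 4$. Ruling out such configurations cannot follow from the relation $h(y)y'-h(y')y=s'y-sy'$ together with the cap $w_{L_U}(Q)\le n-1$: each pairwise intersection of two sublines through $P$ is a point of weight at least $2$, and bounding the number of such heavy points by \eqref{eq:wpointsrank} and \eqref{eq:pesivett} reproduces precisely the $q^{n/2}$-type estimate you already conceded is insufficient. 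Genuinely new structural input about how sublines meet linear sets is needed here, and this is where \cite{DeBeuleVdV} invest most of their (long and delicate) work.

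Two further concrete warning signs. First, your argument nowhere uses the hypothesis $n\le m$, yet the conclusion fails for $n>m+1$, since then $q^{n-1}+1$ exceeds $q^m+1=|\PG(1,q^m)|$; any correct proof of the inductive step must therefore invoke $n\le m$ somewhere, and yours does not. Second, the induction is too rigid even in shape: the hypothesis only gives $|L_{U_0}|\ge q^{n-2}+1$, so if the old configuration happens to be much larger than the bound, nothing forces the new sublines to add $q^{n-2}(q-1)$ points --- what you actually need is a statement controlling the union of the new sublines modulo the old point set, which is essentially as strong as the theorem itself. The closing claim that tracking equality forces $L_U$ to be a club is plausible (clubs are indeed the extremal examples, with all sublines through $P$ concurrent at the weight-$(n-1)$ point) but is likewise asserted, not derived. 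In short: correct normalization and a correct map of where the difficulty lies, but the crux is not crossed.
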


We also refer to \cite{ASmin,CsMP23} for bounds on the size of a linear set.

The following is a result taken from \cite{AMSZ23} which will be useful to compute the size of a linear set of rank $n$ if it admits a line of weight $n-1$.

\begin{lem}\cite[Lemma 3.1]{AMSZ23} \label{lem:extensionpoint}
Let $U$ be a $k$-dimensional $\F_q$-subspace of $V$ and let $v \in V$ be such that $\langle v \rangle_{\F_{q^n}} \notin L_{U} $. Let $U_1=U \oplus \langle v \rangle_{\F_q}$. Then any point in $L_{U_1} \setminus L_{U}$ has weight one in $L_{U_1}$. Moreover, if $\langle v \rangle_{\F_{q^n}} \cap \langle U \rangle_{\F_{q^n}}=\{0\}$, then $\lvert L_{U_1} \rvert = \lvert L_{U} \rvert+q^k$.
\end{lem}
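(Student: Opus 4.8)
The plan is to analyze the natural $\F_q$-linear projection $\pi \colon U_1 \to \F_q$ recording the coefficient of $v$, namely $\pi(u + \lambda v) = \lambda$ for $u \in U$ and $\lambda \in \F_q$, whose kernel is exactly $U$. I first observe that the hypothesis $\langle v \rangle_{\F_{q^n}} \notin L_U$ forces $v \notin U$ (otherwise $\langle v \rangle_{\F_{q^n}}$ would be a point of $L_U$), so that the sum $U_1 = U \oplus \langle v\rangle_{\F_q}$ is genuinely direct and $\dim_{\F_q} U_1 = k+1$; this is what makes $\pi$ well defined.

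For the first assertion, I would take a point $P = \langle w \rangle_{\F_{q^n}} \in L_{U_1} \setminus L_U$, so that its weight in $L_{U_1}$ is $\dim_{\F_q}(U_1 \cap \langle w\rangle_{\F_{q^n}})$, and bound this by restricting $\pi$ to the $\F_q$-subspace $U_1 \cap \langle w \rangle_{\F_{q^n}}$. The kernel of this restriction is $U \cap \langle w \rangle_{\F_{q^n}}$, which is trivial precisely because $P \notin L_U$. Hence the restriction is injective into $\F_q$, forcing $w_{L_{U_1}}(P) \le 1$; since $P \in L_{U_1}$ already gives weight at least one, the weight equals one.

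For the second assertion I would count the vectors of $U_1$ producing points outside $L_U$, and here the transversality hypothesis $\langle v \rangle_{\F_{q^n}} \cap \langle U\rangle_{\F_{q^n}} = \{0\}$ is the key input: if some vector $u + \lambda v$ with $\lambda \neq 0$ determined a point of $L_U$, then $\lambda v$ would lie in $\langle U \rangle_{\F_{q^n}}$, forcing $v \in \langle U \rangle_{\F_{q^n}}$ and contradicting the hypothesis. Thus the set of vectors of $U_1$ giving points of $L_{U_1} \setminus L_U$ is exactly $\{u + \lambda v : u \in U,\ \lambda \in \F_q^{*}\}$, a collection of $q^k(q-1)$ nonzero vectors. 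By the first assertion every such point has weight one, hence arises from exactly $q-1$ of these vectors; dividing, the number of new points is $q^k$, whence $\lvert L_{U_1}\rvert = \lvert L_U\rvert + q^k$.

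The argument is essentially a bookkeeping one, and I expect no serious obstacle. The only points requiring care are to see clearly that it is the weight-one property that licenses the clean division by $q-1$ in the count, and to invoke the transversality hypothesis at exactly the right moment so that no ``new'' vector $u + \lambda v$ with $\lambda \neq 0$ secretly reproduces an old point of $L_U$. Both of these are settled by the single projection $\pi$ together with the two standing hypotheses.
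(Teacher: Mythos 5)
Your proof is correct: the projection $\pi$ with kernel $U$ cleanly gives the weight-one claim, and the transversality hypothesis is invoked exactly where needed to ensure every vector $u+\lambda v$ with $\lambda\neq 0$ yields a point outside $L_U$, so the count $q^k(q-1)/(q-1)=q^k$ goes through. Note that the paper itself states this lemma as a citation of \cite[Lemma 3.1]{AMSZ23} without reproducing a proof; your argument is essentially the same standard one as in that source, which phrases the weight-one step via an explicit $\F_q$-linear combination eliminating $v$ rather than via the projection $\pi$ — a cosmetic difference only.
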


\subsection{Rank-metric codes}\label{sec:rankmetric}

Rank-metric codes were introduced by Delsarte \cite{Delsarte} in 1978 as subsets of matrices and recently they have been intensively investigated due to their applications; see e.g. \cite{app,sheekey_newest_preprint,PZ}.
In this section we will be interested in rank-metric codes in $\F_{q^m}^n$ where the \textbf{rank} (weight) $w(v)$ of a vector $v=(v_1,\ldots,v_n) \in \F_{q^m}^n$ is  
\[w(v)=\dim_{\fq} (\langle v_1,\ldots, v_n\rangle_{\fq}).\] 
The \textbf{rank distance} is defined as $d(x,y)=w(x-y)$, where $x, y \in \F_{q^m}^n$. 

A \textbf{linear rank-metric code} $\C $ is an $\F_{q^m}$-subspace of $\F_{q^m}^n$ endowed with the rank metric.
Let $\C \subseteq \F_{q^m}^n$ be a linear rank-metric code. We will write that $\C$ is an $[n,k,d]_{q^m/q}$ code (or $[n,k]_{q^m/q}$ code) if $k=\dim_{\F_{q^m}}(\C)$ and $d$ is its minimum distance, that is $d=\min\{d(x,y) \colon x, y \in \C, x \neq y  \}$.

From the classification of $\F_{q^m}$-linear isometry of $\F_{q^m}^n$ (see e.g. \cite{berger2003isometries}) the following definition arises. We say that two rank-metric codes $\C,\C' \subseteq \F_{q^m}^n$ are \text{equivalent} if and only if there exists a matrix $A \in \mathrm{GL}(n,q)$ such that
\[\C'=\C A=\{vA : v \in \C\}.\] 

As for the classical Hamming-metric codes, for rank-metric codes, Delsarte showed a Singleton-like bound.

\begin{thm}\cite{Delsarte} \label{th:singletonrank}
    Let $\C \subseteq \F_{q^m}^n$ be an $[n,k,d]_{q^m/q}$ code.
Then 
\begin{equation}\label{eq:boundgen}
mk \leq \max\{m,n\}(\min\{m,n\}-d+1).\end{equation}
\end{thm}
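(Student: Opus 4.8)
The plan is to reduce the statement to a dimension count for a space of matrices over $\fq$ all of whose nonzero elements have large rank. First I would fix an $\fq$-basis of $\F_{q^m}$ and use it to expand each coordinate of a vector in $\F_{q^m}^n$ into a column of length $m$ over $\fq$; this identifies $\F_{q^m}^n$ with the space $\fq^{m\times n}$ of $m\times n$ matrices over $\fq$, and under this identification the rank weight $w(v)$ becomes exactly the $\fq$-rank of the associated matrix. Consequently $\C$, being a $k$-dimensional $\F_{q^m}$-subspace, corresponds to an $\fq$-subspace $\widehat{\C}\subseteq \fq^{m\times n}$ with $\dim_{\fq}\widehat{\C}=mk$, and the condition that $\C$ has minimum distance $d$ translates into the statement that every nonzero matrix of $\widehat{\C}$ has rank at least $d$.

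The core of the argument is a puncturing estimate: if $\mathcal M$ is an $\fq$-subspace of $\fq^{a\times b}$ in which every nonzero matrix has rank at least $d$, then $\dim_{\fq}\mathcal M\leq a(b-d+1)$. To prove this I would consider the linear projection $\pi$ deleting any fixed set of $d-1$ columns, landing in $\fq^{a\times(b-d+1)}$. If $A\in\ker\pi$ then $A$ is supported on the $d-1$ deleted columns, so its rank is at most $d-1<d$, forcing $A=0$; hence $\pi$ is injective on $\mathcal M$ and $\dim_{\fq}\mathcal M\leq a(b-d+1)$. This is the step where the rank metric is used essentially: a matrix occupying only $d-1$ columns cannot have rank $d$, which is the exact analogue of the classical Singleton puncturing but now phrased for the rank weight.

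Finally I would apply this estimate twice. Applied to $\widehat{\C}$ with $(a,b)=(m,n)$ it gives $mk\leq m(n-d+1)$. Applied to the transpose space $\widehat{\C}^{\,T}\subseteq\fq^{n\times m}$, which has the same $\fq$-dimension $mk$ and the same minimum rank (transposition preserves $\fq$-rank), it gives $mk\leq n(m-d+1)$. Combining,
\[
mk\leq \min\{\,m(n-d+1),\,n(m-d+1)\,\}=\max\{m,n\}\,(\min\{m,n\}-d+1),
\]
the last equality being immediate by comparing the two quantities according to whether $m\geq n$ or $m<n$. The main subtlety to keep in mind is that transposition destroys the $\F_{q^m}$-linear structure, so the second bound genuinely requires passing to the $\fq$-matrix picture rather than arguing with vectors over $\F_{q^m}$; equivalently, the first bound alone settles the case $m\geq n$, and the transpose trick is precisely what handles $m<n$.
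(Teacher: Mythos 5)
Your proof is correct and complete. Note that the paper does not prove this statement at all: it is quoted as a known theorem of Delsarte, so there is no internal proof to compare against. Your argument is the standard elementary route to the rank-metric Singleton bound --- identify $\F_{q^m}^n$ with $\F_q^{m\times n}$ via an $\F_q$-basis expansion (using $\F_{q^m}$-linearity of $\mathcal{C}$ to pass from minimum distance to minimum rank of nonzero matrices), observe that deleting $d-1$ columns is injective on any $\F_q$-subspace of minimum rank $d$, and then apply this both to $\widehat{\mathcal{C}}$ and to its transpose; your closing remark that transposition is needed precisely because it destroys the $\F_{q^m}$-structure, with the two bounds $mk\leq m(n-d+1)$ and $mk\leq n(m-d+1)$ combining to the stated $\max/\min$ form, is exactly right. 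This differs from Delsarte's original derivation (which proceeds through the association-scheme/bilinear-forms machinery of his 1978 paper), and what your version buys is self-containedness and brevity at the cost of no additional structural information, which is all that is needed here since the paper only invokes the bound to define MRD codes.
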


An $[n,k,d]_{q^m/q}$ code is called \textbf{Maximum Rank Distance code} (or shortly \textbf{MRD code}) if its parameters attains the bound \eqref{eq:boundgen}. In the next section we will see some example of MRD codes.

One of the main tool of this paper is given by the geometric description of linear rank-metric codes.

The geometric counterpart of non-degenerate rank-metric codes (that is, the columns of any generator matrix of $\C$ are $\fq$-linearly independent) are the systems. 
An $[n,k,d]_{q^m/q}$ \textbf{system} $U$ is an $\F_q$-subspace of $\F_{q^m}^k$ of dimension $n$, such that
$ \langle U \rangle_{\F_{q^m}}=\F_{q^m}^k$ and
$$ d=n-\max\left\{\dim_{\F_q}(U\cap H) \mid H \textnormal{ is an $\F_{q^m}$-hyperplane of }\F_{q^m}^k\right\}.$$

Moreover, two $[n,k,d]_{q^m/q}$ systems $U$ and $U'$ are \textbf{equivalent} if there exists an $\F_{q^m}$-isomorphism $\varphi\in\mathrm{GL}(k,q^m)$ such that
$$ \varphi(U) = U'.$$

\begin{thm}\cite{Ra} \label{th:connection}
Let $\C$ be a non-degenerate $[n,k,d]_{q^m/q}$ rank-metric code and let $G$ be an its generator matrix.
Let $U \subseteq \F_{q^m}^k$ be the $\F_q$-span of the columns of $G$.
The rank weight of an element $x G \in \C$, with $x \in \F_{q^m}^k$ is
\begin{equation}\label{eq:relweight}
w(x G) = n - \dim_{\fq}(U \cap x^{\perp}),\end{equation}
where $x^{\perp}=\{y \in \F_{q^m}^k \colon x \cdot y=0\}$, where $\cdot$ denotes the standard inner product. In particular,
\begin{equation} \label{eq:distancedesign}
d=n - \max\left\{ \dim_{\fq}(U \cap H)  \colon H\mbox{ is an } \F_{q^m}\mbox{-hyperplane of }\F_{q^m}^k  \right\}.
\end{equation}
\end{thm}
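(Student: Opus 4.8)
The plan is to interpret each codeword as the image of an $\fq$-linear evaluation map on $U$ and then invoke the rank--nullity theorem. Write $u_1,\ldots,u_n \in \F_{q^m}^k$ for the columns of $G$; by non-degeneracy these are $\fq$-linearly independent, so $\dim_{\fq} U = n$. For a fixed $x \in \F_{q^m}^k$, the codeword $xG$ has $j$-th coordinate $x \cdot u_j$, so its entries are precisely the images of the generators $u_j$ under the $\fq$-linear map $\phi_x \colon U \to \F_{q^m}$, $y \mapsto x \cdot y$.

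First I would observe that the rank weight $w(xG)$ equals the $\fq$-dimension of the $\fq$-span of the coordinates of $xG$, which is exactly $\dim_{\fq}\im(\phi_x)$, since the $u_j$ generate $U$ over $\fq$. Then I would apply rank--nullity to $\phi_x$: as $\ker(\phi_x) = \{y \in U : x \cdot y = 0\} = U \cap x^{\perp}$, one gets $n = \dim_{\fq} U = \dim_{\fq}(U \cap x^{\perp}) + \dim_{\fq}\im(\phi_x)$, which rearranges to \eqref{eq:relweight}.

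For the statement on the minimum distance I would minimize $w(xG)$ over the nonzero codewords. Since $G$ has rank $k$, the map $x \mapsto xG$ is injective, so nonzero codewords correspond to nonzero $x$; hence $d = \min_{x \neq 0}\bigl(n - \dim_{\fq}(U \cap x^{\perp})\bigr) = n - \max_{x \neq 0}\dim_{\fq}(U \cap x^{\perp})$. Finally I would use that $x \mapsto x^{\perp}$ induces a bijection between the points of $\PG(k-1,q^m)$ and the $\F_{q^m}$-hyperplanes of $\F_{q^m}^k$, so the maximum over nonzero $x$ coincides with the maximum over all hyperplanes $H$, yielding \eqref{eq:distancedesign}.

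The argument is essentially bookkeeping once the evaluation map $\phi_x$ is set up; the only points requiring care are verifying that $\im(\phi_x)$ genuinely captures the $\fq$-span of the coordinates of $xG$ (this is where non-degeneracy and the spanning property of the columns of $G$ enter) and confirming that every $\F_{q^m}$-hyperplane arises as some $x^{\perp}$. I do not expect a real obstacle here, the content of the result being the clean translation of rank weight into the intersection dimension $\dim_{\fq}(U \cap H)$.
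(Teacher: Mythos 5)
Your argument is correct and is essentially the standard proof of this result as given in the cited source \cite{Ra} (the paper itself states the theorem without proof): the rank weight of $xG$ is identified with $\dim_{\fq}\im(\phi_x)$ via the evaluation map $\phi_x\colon y\mapsto x\cdot y$ on $U$, rank--nullity yields \eqref{eq:relweight}, and the bijection between points $\la x\ra_{\F_{q^m}}$ and hyperplanes $x^{\perp}$ gives \eqref{eq:distancedesign}. All the delicate points (non-degeneracy giving $\dim_{\fq}U=n$, injectivity of $x\mapsto xG$, every hyperplane arising as some $x^{\perp}$) are correctly handled, so there is nothing to add.
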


Actually, the above result allows us to give a one-to-one correspondence between equivalence classes of non-degenerate $[n,k,d]_{q^m/q}$ codes and equivalence classes of $[n,k,d]_{q^m/q}$ systems, see \cite{Ra}.
The system $U$ and the code $\C$ as in Theorem \ref{th:connection} are said to be \textbf{associated}.

This connection can be used to show, for instance, that when $n\leq m$ the systems associated to MRD codes are the subspaces defining scattered linear sets with respect to the hyperplanes; see \cite{JohnGeertrui,Lunardon2017,MNT,ZZ}.

For further details we refer to \cite{PSSZ202x,PZ,Ra,sheekey_newest_preprint}.

\subsection{Linearized polynomials}

Any polynomial of the form  $f =\sum_{i=0}^{k} a_i x^{q^i}\in\F_{q^m}[x]$ is called \textbf{$q$-polynomial} (or \textbf{linearized polynomial}); if $a_k\neq0$ then the \textbf{$q$-degree} of $f $ is $k$.
The set of linearized polynomials over $\F_{q^m}$ will be denoted as $L_{m,q}$.
Such set, equipped with the operations of sum and multiplication by elements of $\fq$ and the composition, results to be an $\fq$-algebra.
The quotient algebra $\mathcal{L}_{m,q}=L_{m,q}/(x^{q^m}-x)$ is isomorphic to the $\fq$-algebra of $\fq$-linear
endomorphisms of $\F_{q^m}$.
Therefore we can identify the elements of $\mathcal{L}_{m,q}$ with the $q$-polynomials having $q$-degree smaller than $m$.
For an overview see \cite{wl,lidl_finite_1997}.

In order to find new examples of optimal codes in the rank metric, Sheekey in \cite{sheekey2016new} introduced the notion of scattered polynomial.

\begin{definition}
A polynomial $f \in {\mathcal{L}}_{m,q}$ is said to be \textbf{scattered} if for any $a,b \in \mathbb{F}_{q^m}^*$, $f(a)/a=f(b)/b$ implies that $a$ and $b$ are $\fq$-proportional.
\end{definition}

An example of scattered polynomial is given by $f(x)=x^q$.

\section{Generalized Gabidulin codes}\label{sec:gab}

In this section we present the family of generalized Gabidulin codes and some characterization results. We will also analyze the geometric consequences of such results.
Since the works of Delsarte in \cite{Delsarte} and Gabidulin in \cite{Gabidulin}, we know that linear MRD codes exist for any set of parameters. Indeed, they showed the first family of MRD codes, known as \emph{Gabidulin codes}. Later this family was generalized in \cite{kshevetskiy_new_2005}, obtaining the faimly of MRD codes called \emph{generalized Gabidulin codes}.
To define these codes we introduce the notion of Moore matrix. Let $\sigma$ be any automorphism of $\F_{q^m}$ fixing $\fq$, then for any vector $(v_1,\ldots,v_n)\in \F_{q^m}^n$ we define the \textbf{Moore matrix} of order $k\times n$ (with respect to $\sigma$) as
\[ M_{k,\sigma}(v_1,\ldots,v_n)=\begin{pmatrix}
v_1 & v_2 & \ldots & v_n\\
\sigma(v_1) & \sigma(v_2) & \ldots & \sigma(v_n)\\
& & \vdots & \\
\sigma^{k-1}(v_1) & \sigma^{k-1}(v_2) & \ldots & \sigma^{k-1}(v_n)
\end{pmatrix}.\]
Let $(v_1,\ldots,v_n)\in \F_{q^m}^n$ be such that $v_1,\ldots,v_n$ are $\fq$-linearly independent and let $\sigma$ as before. The code $\C\subseteq \F_{q^m}^n$ of dimension $k$ having $M_{k,\sigma}(v_1,\ldots,v_n)$ as a generator matrix is known as \textbf{generalized Gabidulin code}. When $\sigma$ is the Frobenius automorphism then we get the original examples found by Delsarte and Gabidulin, called simply \textbf{Gabidulin codes}.
Equivalently, these codes can be described via the evaluation of the polynomials in the subspace $\langle x,\sigma(x),\ldots,\sigma^{k-1}(x)\rangle_{\F_{q^m}}$ over $\fq$-linearly independent elements.
In \cite{HorlMarsh17}, Horlemann-Trautmann and Marshall provided some characterization results for MRD codes and for the family of generalized Gabidulin codes, whose approach has been used also for other families of codes; see \cite{GiuZ,NPH2}. 
The notion of equivalence of rank-metric codes considered in \cite{HorlMarsh17} is more general, but the same results can be proved with the equivalence considered in this paper.

More precisely, we are interested in the following results.

\begin{thm}\label{thm:characMRDGab}\cite[Theorem 5.1]{HorlMarsh17}
    If $\mathcal{C}$ is an linear MRD code in $\F_{q^m}^n$ of dimension either one or $n-1$, then $\C$ is equivalent to a Gabidulin code.
\end{thm}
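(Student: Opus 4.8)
The plan is to split according to the two admissible dimensions, dispatch $k=\dim_{\F_{q^m}}\C=1$ by a direct rank computation, and then reduce the case $k=n-1$ to it by duality.

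First I would treat $\dim_{\F_{q^m}}\C=1$. A generator matrix is then a single row $G=(v_1,\ldots,v_n)$ with $v_i\in\F_{q^m}$, and equality in the Singleton-like bound \eqref{eq:boundgen} of Theorem \ref{th:singletonrank} forces $n\le m$ and minimum distance $d=n$. Hence every nonzero codeword $xG=(xv_1,\ldots,xv_n)$, with $x\in\F_{q^m}^*$, must have rank weight $n$, i.e.\ $xv_1,\ldots,xv_n$ must be $\fq$-linearly independent. As multiplication by a nonzero $x$ is an $\F_{q^m}$-linear, hence $\fq$-linear, bijection of $\F_{q^m}$, this holds for every $x\neq 0$ precisely when $v_1,\ldots,v_n$ are themselves $\fq$-linearly independent. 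So, under the MRD hypothesis, the $v_i$ are $\fq$-linearly independent; but then $G$ is exactly the $1\times n$ Moore matrix $M_{1,\sigma}(v_1,\ldots,v_n)$ (which for a single row does not depend on $\sigma$), and $\C$ is a Gabidulin code.

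For $\dim_{\F_{q^m}}\C=n-1$ I would pass to the dual code $\C^\perp=\{y\in\F_{q^m}^n\colon x\cdot y=0\text{ for all }x\in\C\}$, which has dimension $1$. Two ingredients then close the argument. First, the dual of an MRD code is again MRD (a classical fact going back to Delsarte \cite{Delsarte}), so $\C^\perp$ is a $1$-dimensional MRD code and, by the previous paragraph, a Gabidulin code: it is generated by a row $(h_1,\ldots,h_n)$ of $\fq$-linearly independent elements. Second, the family of Gabidulin codes is closed under duality, so that $\C=(\C^\perp)^\perp$ is equivalent to a Gabidulin code, which is exactly the assertion. Concretely, the closure statement asks for $\fq$-linearly independent $v_1,\ldots,v_n$ such that the $(n-1)\times n$ Moore matrix $M_{n-1,\sigma}(v_1,\ldots,v_n)$ has all its rows orthogonal to $(h_1,\ldots,h_n)$, i.e.\ $\sum_{i=1}^n \sigma^j(v_i)h_i=0$ for $j=0,\ldots,n-2$; such a matrix then generates $(\C^\perp)^\perp=\C$.

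The main obstacle is precisely this self-duality of the Gabidulin family. Proving it is a computation with Moore matrices: the full $n\times n$ Moore matrix $M_{n,\sigma}(h_1,\ldots,h_n)$ is invertible exactly because the $h_i$ are $\fq$-linearly independent, and one extracts the desired evaluation points $v_1,\ldots,v_n$ from (a suitable part of the inverse of) this matrix, so that the Moore rows built on the $v_i$ fill out the orthogonal complement of $\langle(h_1,\ldots,h_n)\rangle$. The delicate point is checking that the $v_i$ produced this way are again $\fq$-linearly independent---equivalently, that the dual is a genuine non-degenerate Gabidulin code rather than merely a code admitting a Moore-type generator; this is where the bookkeeping with generalized Vandermonde/Moore determinants is needed. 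Everything else reduces to the elementary full-rank argument of the $k=1$ case together with the involutivity $(\C^\perp)^\perp=\C$.
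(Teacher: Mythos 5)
Your proposal is correct and follows exactly the route the paper itself indicates: the theorem is quoted from \cite{HorlMarsh17} without its proof, but the remark immediately after Theorem \ref{thm:characMRDGab2} sketches precisely your argument --- settle the one-dimensional case directly, then handle dimension $n-1$ by duality, using that duals of MRD codes are MRD and that the Gabidulin family is closed under duality. Your direct rank computation for $k=1$ (including the observation that MRD forces $n\leq m$ and $d=n$, so the entries of the generator row must be $\fq$-linearly independent) and the reduction of $k=n-1$ to it via $(\C^\perp)^\perp=\C$ correctly flesh out that sketch.
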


\begin{thm}\label{thm:characMRDGab2}\cite[Corollary 5.2]{HorlMarsh17}
    If $\mathcal{C}$ is an linear MRD code in $\F_{q^m}^n$, with $n \in \{1,2,3\}$, then $\C$ is equivalent to a Gabidulin code.
\end{thm}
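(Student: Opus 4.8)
The plan is to reduce every admissible pair of parameters to the two cases already settled in Theorem \ref{thm:characMRDGab}, namely dimension $1$ and dimension $n-1$. First I would recall that a non-degenerate $[n,k]_{q^m/q}$ code satisfies $1 \le k \le n$, since a generator matrix has $\fq$-linearly independent columns and $n$ columns of length $k$ can be $\fq$-independent only when $k \le n$. Thus for $n \in \{1,2,3\}$ the dimension $k$ ranges over $\{1,\ldots,n\}$, and the crucial elementary observation is that for such small $n$ there is no integer strictly between $1$ and $n-1$: every admissible $k$ equals $1$, $n-1$, or $n$.

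Next I would dispose of the three resulting cases. When $k=1$ the code is one-dimensional and Theorem \ref{thm:characMRDGab} applies verbatim, giving equivalence to a Gabidulin code. When $k=n-1$ the same theorem applies directly (this is the only genuinely intermediate case, occurring for $(n,k)=(3,2)$); alternatively, one may pass to the dual code $\C^\perp$, which is an MRD code of dimension $n-k=1$, invoke Theorem \ref{thm:characMRDGab} there, and use that the dual of a Gabidulin code is again a Gabidulin code. Finally, when $k=n$ the Singleton-like bound \eqref{eq:boundgen} forces $d=1$ and $\C=\F_{q^m}^n$; since a full Moore matrix $M_{n,\sigma}(v_1,\ldots,v_n)$ built from an $\fq$-basis $v_1,\ldots,v_n$ of $\F_{q^m}$ is nonsingular, the corresponding $[n,n]_{q^m/q}$ Gabidulin code is the whole space as well, and $\C$ is trivially equivalent to it.

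I do not expect a substantive obstacle here: the statement is essentially a bookkeeping corollary of Theorem \ref{thm:characMRDGab}, and the only points requiring care are checking that no intermediate dimension arises when $n \le 3$ and handling the degenerate full-space case $k=n$ separately. The single structural fact I rely on beyond Theorem \ref{thm:characMRDGab} is the closure of the Gabidulin family under duality, which is standard and is what keeps the argument symmetric in $k$ and $n-k$.
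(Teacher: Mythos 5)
This statement is imported by the paper from \cite[Corollary 5.2]{HorlMarsh17} without an internal proof, and your derivation reproduces exactly how that corollary follows from the preceding theorem: for $n\le 3$ every admissible dimension $1\le k\le n$ equals $1$, $n-1$, or $n$, the first two cases being Theorem \ref{thm:characMRDGab} and the last the full-space case, where the Singleton-like bound \eqref{eq:boundgen} forces $d=1$ and the nonsingularity of the full Moore matrix of an $\fq$-independent tuple shows the $[n,n]_{q^m/q}$ Gabidulin code is all of $\F_{q^m}^n$. Your argument is correct — the duality alternative for $k=n-1$ is even echoed in the paper's own remark after Theorem \ref{thm:characMRDGab} — with the only implicit convention being $n\le m$ (as in the cited source and in the paper's application, where $m=4$ and $n\le 4$), since otherwise length-$n$ Gabidulin codes as defined do not exist.
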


\begin{rem}
    One could also derive Theorem \ref{thm:characMRDGab} by first looking at the one-dimensional Gabidulin codes and then using that the dual of a Gabidulin code is a Gabidulin code as well.
\end{rem}

Using the connection described in Section \ref{sec:rankmetric}, the above results read as follows.

\begin{cor}\label{cor:inslinGab}
Consider an $\fq$-linear set $L_U$ in $\mathrm{PG}(k-1,q^m)$ of rank $n$ which is scattered with respect to the hyperplanes. Then
\begin{itemize}
    \item if $k=n-1$, then $U$ is $\mathrm{GL}(n-1,q^m)$-equivalent to $\{ (x,x^q,\ldots,x^{q^{n-2}})\colon x \in Z \}$ where $Z$ is any $\fq$-subspace of $\F_{q^m}$ of dimension $n$;
    \item if $n \in \{1,2,3\}$ and $k\leq n$, then $U$ is $\mathrm{GL}(k,q^m)$-equivalent to\\ $\{ (x,x^q,\ldots,x^{q^{k-1}})\colon x \in Z \}$, where $Z$ is any $\fq$-subspace of $\F_{q^m}$ of dimension $n$.
\end{itemize}
\end{cor}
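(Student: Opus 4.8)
The plan is to translate the two statements of Theorems~\ref{thm:characMRDGab} and~\ref{thm:characMRDGab2} about MRD codes into the language of linear sets via the correspondence of Theorem~\ref{th:connection}, and then to recognize the Gabidulin codes appearing there explicitly as Moore matrices with respect to the Frobenius automorphism $\sigma = x \mapsto x^q$. The first observation I would record is the dictionary between the hypotheses: an $\fq$-linear set $L_U$ of rank $n$ in $\mathrm{PG}(k-1,q^m)$ that is scattered with respect to the hyperplanes corresponds, under Theorem~\ref{th:connection}, precisely to an $[n,k,d]_{q^m/q}$ system $U$ whose associated code attains $d = n - (n - k) \cdot \tfrac{?}{?}$; concretely, ``scattered with respect to the hyperplanes'' forces $\dim_{\fq}(U \cap H) \le k-1$ for every $\F_{q^m}$-hyperplane $H$, which by \eqref{eq:distancedesign} gives minimum distance $d = n-k+1$, and one then checks via the Singleton-type bound \eqref{eq:boundgen} (using $n \le m$) that this is exactly the MRD condition. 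So the scattered-with-respect-to-hyperplanes systems are exactly the systems associated to linear MRD codes.

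With that equivalence in hand, the two bullet points follow by applying the two cited theorems to the associated code $\C$ and then reading the conclusion back through the correspondence. For the first bullet, with $k = n-1$, I would invoke Theorem~\ref{thm:characMRDGab}: a linear MRD code of dimension $k = n-1$ is equivalent to a Gabidulin code, i.e.\ one with generator matrix the Moore matrix $M_{n-1,\sigma}(v_1,\ldots,v_n)$ for $\sigma$ the Frobenius. For the second bullet, with $n \in \{1,2,3\}$ and $k \le n$, I would invoke Theorem~\ref{thm:characMRDGab2} instead. In both cases the code-equivalence by a matrix $A \in \mathrm{GL}(n,q)$ corresponds, on the system side, to an $\fq$-linear change of coordinates on the evaluation points, while the $\F_{q^m}$-isomorphism $\varphi \in \mathrm{GL}(k,q^m)$ is exactly the $\mathrm{GL}(k,q^m)$-equivalence asserted in the statement.

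The concrete normal form is then produced by spelling out the columns of the Moore matrix: the columns of $M_{k,\sigma}(v_1,\ldots,v_n)$ with $\sigma = x \mapsto x^q$ are the vectors $(v_i, v_i^q, \ldots, v_i^{q^{k-1}})$, so the $\fq$-span of the columns is exactly $\{(x, x^q, \ldots, x^{q^{k-1}}) \colon x \in Z\}$ where $Z = \langle v_1, \ldots, v_n \rangle_{\fq}$ is an $\fq$-subspace of $\F_{q^m}$ of dimension $n$ (the $v_i$ being $\fq$-linearly independent by the definition of the Moore-matrix construction). This identifies the associated system with the claimed set, completing each bullet.

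The main obstacle I anticipate is purely bookkeeping rather than conceptual: one must be careful that the notion of code equivalence used in the cited Theorems~\ref{thm:characMRDGab} and~\ref{thm:characMRDGab2} matches the one fixed in Section~\ref{sec:rankmetric} (multiplication by $A \in \mathrm{GL}(n,q)$ on the right), and that this equivalence transports correctly to $\mathrm{GL}(k,q^m)$-equivalence of systems under the correspondence of Theorem~\ref{th:connection}; the remark following Theorem~\ref{thm:characMRDGab2} already signals that the more general equivalence in \cite{HorlMarsh17} specializes to the present one. The only other point requiring care is confirming that ``scattered with respect to the hyperplanes'' is genuinely equivalent to the MRD property in the regime $n \le m$, which is the content of the remark at the end of Section~\ref{sec:rankmetric} and which I would cite directly rather than reprove.
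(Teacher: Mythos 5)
Your proposal is correct and takes essentially the same route as the paper, which offers no separate argument for this corollary beyond the one you reconstruct: translate ``scattered with respect to the hyperplanes'' into $d=n-k+1$ via \eqref{eq:distancedesign}, note this is the MRD condition under the Singleton-like bound \eqref{eq:boundgen} when $n\leq m$, apply Theorems \ref{thm:characMRDGab} and \ref{thm:characMRDGab2}, and read the system off the columns of the Moore matrix through the correspondence of Theorem \ref{th:connection}. Your bookkeeping of the two equivalences (right multiplication by $A\in\mathrm{GL}(n,q)$ fixing the column span, versus $\varphi\in\mathrm{GL}(k,q^m)$ acting on the system) is exactly the intended content of the phrase ``using the connection described in Section \ref{sec:rankmetric}''.
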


\begin{rem}
When $k=2$, the above result implies that the scattered linear sets in PG$(1,q^m)$ of rank $3$ are all equivalent (via the action of GL$(2,q^m)$ on the subspaces representing the linear sets);  see also \cite[Proposition 5]{CsZa16} and \cite[Theorem 5]{LavVdV10}.
\end{rem}

\section{Saturating linear sets in PG$(2,q^4)$ of minimal rank}\label{sec:main}

A $1$-saturating linear set $L_U$ in PG$(2,q^4)$ is an $\fq$-linear set for which the secant lines to $L_U$ cover all the points of the plane, i.e. for any point $P$ of PG$(2,q^4)$ there exists at least one line through $P$ meeting $L_U$ in at least two points.
An easy example is given in the following.

\begin{ex}\label{ex:scattm+1}
Let $L_U$ be a scattered $\fq$-linear set in PG$(2,q^4)$ of rank $5$, that is a linear blocking set. $L_U$ is an example of $1$-saturating linear set.
Indeed, the size of $L_U$ is $q^4+q^3+q^2+q+1$ and the number of lines through any point of the plane is $q^4+1$, so that for any point there exists at least one secant line to $L_U$ passing through it. An explicit example is given by $L_U$, where 
\[ U=\{ (x,x^q,a)\colon x \in \F_{q^4}, a \in \fq \}. \]
\end{ex}

In this section we will prove that the minimal rank of a $1$-saturating linear set in PG$(2,q^4)$ is $5$, answering to Question 4.2 by Bartoli, Borello and Marino in \cite{BBM23}.
Indeed, by \eqref{eq:boundsons} and by Example \ref{ex:scattm+1}, we know that 
\[ 4 \leq s_{q^4/q}(3,2)\leq 5. \]
In \cite[Section 3]{BBM23} the authors proved that if $q$ is  large and even then $s_{q^4/q}(3,2)= 5$. Their proof rely on a careful study of the rational points of algebraic varieties associated with the linear sets of PG$(2,q^4)$ of rank $4$.

In this section we will avoid the algebraic geometry approach and we will prove that for any prime power $q$, $s_{q^4/q}(3,2)=5$. Finally we will also characterize the linear sets of minimum rank giving saturating linear sets in PG$(2,q^4)$.

\begin{rem}
    We will frequently use that the property of a linear set to be $1$-saturating is invariant under projective equivalence. For instance, if $L_U$ is $1$-saturating, $L_{\varphi(U)}$ is $1$-saturating as well, for any $\varphi \in \mathrm{GL}(3,q^4)$.
\end{rem}

We start with the following lemma, in which we prove that a $1$-saturating linear set of rank $4$ cannot be contained in a line.

\begin{lem}\label{lem:step0}
    Suppose that $L_U$ is an $\fq$-linear set of rank $4$ which is $1$-saturating in $\mathrm{PG}(2,q^4)$.
    Then $L_U$ spans the plane.
\end{lem}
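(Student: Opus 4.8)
The plan is to prove the contrapositive: assume that $L_U$ is a $1$-saturating $\fq$-linear set of rank $4$ that does \emph{not} span the plane, and derive a contradiction. If $L_U$ fails to span $\mathrm{PG}(2,q^4)$, then it is contained in some line $\ell=\mathrm{PG}(1,q^4)$, so we may regard $U$ as a rank-$4$ $\fq$-subspace giving a linear set on $\ell$. I would first count how many points of the plane the secant lines to $L_U$ can possibly cover and compare this with the total number $q^8+q^4+1$ of points of $\mathrm{PG}(2,q^4)$.

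The key steps, in order, are as follows. First, I would bound $|L_U|$ from above using \eqref{eq:card}, so $|L_U|\le (q^4-1)/(q-1)=q^3+q^2+q+1$. Since all secant lines to $L_U$ pass through points of $\ell$, every secant line is either $\ell$ itself or a line meeting $\ell$ in a single point of $L_U$. I would then observe that the points of the plane covered by secant lines are exactly the points on lines joining two points of $L_U$; as $L_U\subseteq\ell$, any two distinct points of $L_U$ span $\ell$ itself, so in fact \emph{every} secant line equals $\ell$. This is the crux: a linear set contained in a line has all its secant lines equal to that single line $\ell$, hence the secant lines cover only the $q^4+1$ points of $\ell$, which is far fewer than the $q^8+q^4+1$ points of the plane. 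This immediately contradicts the $1$-saturating hypothesis, since no point off $\ell$ lies on a secant line.

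I expect the genuinely delicate point to be making precise the notion of ``secant line'' so that the above counting is airtight, rather than any hard computation. Specifically, the definition requires each point $P$ of the plane to lie on a line meeting $L_U$ in at least two points. If $L_U$ lies on $\ell$ and has at least two points, the only line through two of them is $\ell$ (two distinct points determine a unique line), so the family of secant lines is the singleton $\{\ell\}$, and its union is $\ell$. One must also dispose of the degenerate possibilities: if $|L_U|\le 1$ there are no secant lines at all, which trivially fails saturation; and a rank-$4$ linear set has $|L_U|\ge q^3+1>1$ points by \eqref{eq:size} together with the fact that it is non-empty, so this case does not arise. Assembling these observations yields that $L_U$ cannot be $1$-saturating unless it spans the plane, completing the proof.
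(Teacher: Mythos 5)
Your proof is correct and takes essentially the same route as the paper's one-line argument: since $L_U\subseteq\ell$, any secant line contains two distinct points of $\ell$ and hence coincides with $\ell$, so no point off $\ell$ is covered by a secant line, contradicting $1$-saturation. One side remark is inaccurate though harmless: \eqref{eq:size} only yields $|L_U|\equiv 1\pmod q$, and a rank-$4$ linear set contained in a line can in fact be a single point of weight $4$ (e.g.\ $U=\{(x,0,0)\colon x\in\F_{q^4}\}$), so the claimed bound $|L_U|\ge q^3+1$ is false in general --- but this does not damage the argument, because you had already disposed of the case $|L_U|\le 1$ correctly by noting that then there are no secant lines at all.
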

\begin{proof}
        If $L_U$ is contained in the line $\ell$, then any point outside $\ell$ is not covered by any secant line to $L_U$. A contradiction.
\end{proof}

We now prove that a $1$-saturating linear set needs to be scattered, with some extra properties.

\begin{lem}\label{lem:step1}
    Suppose that $L_U$ is an $\fq$-linear set of rank $4$ which is $1$-saturating in $\mathrm{PG}(2,q^4)$.
    Then $L_U$ is either scattered with respect to the lines or scattered with the property that all the lines have weight at most two except one of weight three.
\end{lem}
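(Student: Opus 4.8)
The plan is to reduce everything to a single claim: a $1$-saturating $\fq$-linear set $L_U$ of rank $4$ in $\PG(2,q^4)$ has no point of weight $2$ (equivalently, it is scattered). Granting this, the dichotomy is cheap. By Lemma \ref{lem:step0} the set $L_U$ spans the plane, so $U$ lies in no $\F_{q^4}$-hyperplane; this rules out a line of weight $4$ and a point of weight $\ge 3$ (either would force $L_U$ inside a line). Thus points have weight $1$ or $2$ and lines weight $\le 3$. If $L_U$ is scattered and $\ell_1,\ell_2$ are two lines of weight $3$ meeting at $Q$, then writing $W_i$ for the underlying $\F_{q^4}$-planes, inclusion--exclusion gives $w_{L_U}(Q)=\dim_{\fq}(U\cap W_1\cap W_2)\ge 3+3-\dim_{\fq}U=2$, contradicting scatteredness. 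Hence a scattered $L_U$ has at most one line of weight $3$, which is exactly the stated alternative (zero such lines $=$ scattered with respect to the lines; one such line $=$ the second case).

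So suppose, for contradiction, that some point $P_0$ has weight $2$, and set $U_0=U\cap\la v_0\ra_{\F_{q^4}}$ with $P_0=\la v_0\ra_{\F_{q^4}}$ and $U=U_0\oplus U'$. Since $L_U$ spans the plane, $\la U'\ra_{\F_{q^4}}$ must be a line $m$ with $P_0\notin m$, and $L_{U'}$ is a subline of $m$ with $q+1$ points. Projecting from $P_0$ onto $m$, every point of $L_U\setminus\{P_0\}$ maps into this subline, so $L_U$ is contained in the union of the $q+1$ lines $\ell_1,\dots,\ell_{q+1}$ joining $P_0$ to the points of $L_{U'}$; these are precisely the $q+1$ lines of weight $3$ through $P_0$, each meeting $L_U$ in $q^2+1$ points (note that $q+1\ge 3$ lines of weight $3$ already places such an $L_U$ outside the claimed dichotomy, so these configurations are exactly what must be excluded). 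Up to the action of $\GL(3,q^4)$, which preserves being $1$-saturating, I would normalise $P_0=\la e_1\ra$, $U'=\la e_2,e_3\ra_{\fq}$, and $U_0=Ae_1$ with $A=\fq\oplus\fq\alpha$ a $2$-dimensional $\fq$-subspace of $\F_{q^4}$ containing $1$; thus $U=A\times\fq\times\fq$.

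With this model I would exhibit an uncovered point. Consider $R=\la(x,1,\alpha)\ra$ with $\alpha\notin\fq$; since the third coordinate is not in $\fq$, $R$ lies on none of the $\ell_i$, so it can only be covered by a secant line joining two points of $L_U$ on distinct $\ell_i$. A direct determinant computation over the possible pair types shows that $R$ is covered if and only if $x$ lies in $\bigcup_{\rho\in\fq}\big(A+(\alpha-\rho)A\big)$. Each of these $q$ subspaces contains $A$ and is proper, because $\alpha-\rho=(\alpha-\rho)\cdot 1\in A\cap(\alpha-\rho)A$ forces $\dim_{\fq}(A+(\alpha-\rho)A)\le 3$. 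Passing to the $2$-dimensional quotient $\F_{q^4}/A$, the images of these subspaces are at most $q$ points of the projective line $\PG(\F_{q^4}/A)=\PG(1,q)$, which has $q+1$ points; hence some direction is missed, and lifting it yields an $x$ with $R$ uncovered, contradicting that $L_U$ is $1$-saturating.

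The main obstacle is the structural step in the second paragraph: recognising that a single weight-$2$ point forces $L_U$ into a pencil of $q+1$ concurrent lines, which collapses the geometry enough to make the covered set computable. Once that is in place the finish is soft, resting only on the inequality $q<q+1$ (too few subspaces to cover $\PG(1,q)$). I expect the book-keeping of the determinant conditions for the various pair types (two points on different $\ell_i$, or one on the exceptional line $L_{U'}$) to be the only genuinely computational part; the choice of third coordinate equal to $\alpha\in A$ is what makes every contributing subspace proper and drives the contradiction.
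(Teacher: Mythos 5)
Your proposal is correct, and it takes a genuinely different route from the paper's. The paper argues by counting: assuming a line $r$ of weight three exists, it first shows every secant line must meet $r\cap L_U$ (via $\dim_{\fq}(W'\cap W\cap U)\geq 3+2-4=1$), then, if a point of weight two exists, computes $|L_U|=q^3+q^2+1$ and $|r\cap L_U|=q^2+1$ using Lemma \ref{lem:extensionpoint} and the weight equations \eqref{eq:pesicard}--\eqref{eq:pesivett}, and bounds the number of points lying on secant lines by $q^8+q^2+1<q^8+q^4+1$, contradicting $1$-saturation; uniqueness of the weight-three line is then obtained exactly as in your inclusion--exclusion step. You instead normalize the putative weight-two configuration to $U=A\times\fq\times\fq$ with $A=\la 1,\alpha\ra_{\fq}$ and exhibit an explicit uncovered point: I checked your determinant condition, and for $R=\la(x,1,\alpha)\ra$ the covered locus is precisely $x\in A+\alpha A=\la 1,\alpha,\alpha^2\ra_{\fq}$; moreover each of your subspaces $A+(\alpha-\rho)A$ equals $\la 1,\alpha,\alpha^2\ra_{\fq}$, so your union of $q$ proper subspaces is in fact a single $\fq$-subspace of dimension at most three, and your pigeonhole in $\PG(\F_{q^4}/A,\fq)\cong\PG(1,q)$ is more than sufficient (your flagged choice of third coordinate $\alpha\in A\setminus\fq$ is indeed the crucial point, both for $R$ avoiding the pencil of lines $\ell_i$ through $P_0$ and for the covered locus being proper). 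Your small structural claims also hold: $\la U'\ra_{\F_{q^4}}$ is a line avoiding $P_0$ and $L_{U'}$ has $q+1$ points, since any degeneracy would put $L_U$ inside a line, contradicting Lemma \ref{lem:step0}. What each approach buys: the paper's count is coordinate-free and needs no normal form for $U$, but gives no explicit witness; your argument is more elementary and self-contained (no size lemmas needed), handles non-scatteredness directly without first locating a weight-three line, and produces a concrete uncovered point --- notably in the same spirit as the paper's own proof of Theorem \ref{main}, where $\la(0,0,1)\ra_{\F_{q^4}}$ is shown uncovered via the scatteredness of $x^q$, so your proof makes the whole argument stylistically uniform.
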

\begin{proof}
    By Lemma \ref{lem:step0}, $L_U$ cannot be contained in any line.
    Therefore, we have that $w_{L_U}(\ell)\leq 3$ for any line $\ell$ of PG$(2,q^4)$. 
    Suppose that there exists a line $r=\mathrm{PG}(W,\F_{q^4})$ such that $w_{L_U}(r)=3$. Then we have that
    \[ U=(W\cap U)\oplus \la u \ra_{\fq}, \]
    where $u \in U \setminus W$. 
    Now, let $\ell=\mathrm{PG}(W',\F_{q^4})$ be a secant line to $L_U$. Since $\ell$ is a secant line, then $w_{L_U}(\ell)\geq 2$, and so \begin{equation}\label{eq:weightaa}\dim_{\fq}(W'\cap (W\cap U))=\dim_{\fq}((W'\cap U)\cap (W\cap U))\geq 1.\end{equation} So, all the secant line to $L_U$ meet $r\cap L_U$.
    Suppose that $L_U$ is not scattered, that is there exists a point $P=\la v \ra_{\F_{q^4}}$ such that $w_{L_U}(P)>1$. It cannot have weight greater than $2$, otherwise $L_U$ would be contained in a line.
    Hence $w_{L_U}(P)=2$ and $P$ lies on a line $r'$ of weight three with respect to $L_U$. Therefore $r=r'$. Note that in $L_U$ cannot exist another point of weight two (otherwise we get again a contradiction from the fact that $L_U$ is not contained in a line), so that $|r\cap L_U|= q^2+1$ by \eqref{eq:pesicard} and \eqref{eq:pesivett}. Taking into account the following facts: 
    \begin{itemize}
        \item the secant lines to $L_U$ are the line $r$ and all the lines joining $L_U\cap r$ with any point of $L_U\setminus r$;
        \item any secant line to $L_U$ has at least $q+1$ points;
        \item any secant line to $L_U$ through $P$ meets $L_U$ is at least $q^2+1$ points;
        \item $|L_U|=q^3+q^2+1$ by Lemma \ref{lem:extensionpoint};
    \end{itemize}
    the number of points of PG$(2,q^4)$ lying on a secant line to $L_U$ is at most
    \[ q^2 \frac{q^3+q^2+1-(q^2+1)}{q}(q^4-q)+\frac{q^3+q^2+1-(q^2+1)}{q^2}(q^4-q^2)+|L_U|\]\[=q^8+q^2+1, \]
    where the first summand is obtained by considering the secant lines to $L_U$ through the points of weight one in $r\cap L_U$ (all but $P$) and we divide by $q$ since any secant line to $L_U$ has at least $q+1$ points; the second one corresponds to the secant to $L_U$ line through the point $P$ and, since we did not consider the points of $L_U$ in the previous two summands, we added it at the end of the sum. 
    Hence, since $q^8+q^2+1$ is strictly less than the number of the points in the plane, we obtain that $L_U$ is not $1$-saturating, a contradiction. To conclude, we need to show that there exists only one line in the plane having weight three. Suppose that there is another line $s$ having weight three in $L_U$. Since the rank of $L_U$ is $4$, arguing as in \eqref{eq:weightaa}, $w_{L_U}(s\cap r)\geq 2$, a contradiction to the scatterdness of $L_U$.
    The assertion is now proved.
\end{proof}

We are now ready to prove the main result, by excluding that linear sets considered in the above lemma are $1$-saturating linear sets.

\begin{thm}\label{main}
    For any prime power $q$, $s_{q^4/q}(3,2)=5$.
\end{thm}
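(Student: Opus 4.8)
The plan is to establish the lower bound $s_{q^4/q}(3,2)\geq 5$; together with Example \ref{ex:scattm+1} and the bound \eqref{eq:boundsons} this gives equality. Equivalently, I will show that no $\fq$-linear set of rank $4$ in $\PG(2,q^4)$ can be $1$-saturating. By Lemmas \ref{lem:step0} and \ref{lem:step1}, and since the $1$-saturating property is invariant under $\mathrm{GL}(3,q^4)$, any such $L_U$ may be assumed to be of one of two types: either scattered with respect to the lines, or scattered with a unique line of weight three (all others of weight at most two). For each type I will exhibit a single point $P$ of $\PG(2,q^4)$ through which no secant line to $L_U$ passes, contradicting the hypothesis. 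The uniform device is projection from $P$ onto a fixed line: a point $P\notin L_U$ lies on a secant line exactly when two distinct points of $L_U$ are collinear with $P$, i.e. when the projection of $L_U$ from $P$ is not injective. Hence it suffices to produce, in each case, a point $P\notin L_U$ from which $L_U$ projects injectively.

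For the first type, $L_U$ is scattered with respect to the hyperplanes of $\PG(2,q^4)$, so Corollary \ref{cor:inslinGab} applies with $k=n-1$ and $n=4$, forcing $Z=\F_{q^4}$. After an element of $\mathrm{GL}(3,q^4)$ I may therefore assume $U=\{(x,x^q,x^{q^2}) : x \in \F_{q^4}\}$. I will take $P=\la (0,0,1)\ra$, which is not in $L_U$, and project from $P$ onto the line $z=0$; the point $\la (x,x^q,x^{q^2})\ra$ maps to $\la (x,x^q,0)\ra$. Two such images coincide only when the parameters are $\fq$-proportional, i.e. only for the same point of $L_U$, so the projection is injective and $P$ is uncovered.

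For the second type I first normalize. The unique weight-three line $r=\PG(W,\F_{q^4})$ carries a scattered $\fq$-linear set of rank $3$ on $r\cong \PG(1,q^4)$, and such linear sets are unique up to $\mathrm{GL}(2,q^4)$ by the Remark following Corollary \ref{cor:inslinGab}; together with $U=(W\cap U)\oplus\la u\ra_{\fq}$ this lets me assume $r$ is $z=0$ and $U=\{(x,x^q,a): x \in Z,\ a\in\fq\}$, where $Z$ is a $3$-dimensional $\fq$-subspace of $\F_{q^4}$, hence a proper $\fq$-hyperplane. I will fix any $\alpha \in \F_{q^4}\setminus Z$ and show that $P=\la (\alpha,\alpha^q,1)\ra\notin L_U$ is uncovered. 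Projecting from $P$ onto $z=0$ sends $\la (x,x^q,a)\ra$ to $\la (x-a\alpha,\,(x-a\alpha)^q,\,0)\ra$, using $a\in\fq$ so that $a^q=a$. Writing $w=x-a\alpha$, two images coincide only when $w$ and $w'$ are $\fq$-proportional; the resulting relation $y-\nu x=(b-\nu a)\alpha$ with $\nu\in\fq$ then forces $b=\nu a$ and $y=\nu x$, because $y-\nu x\in Z$ while $\alpha\notin Z$. Thus the projection is injective and $P$ is uncovered, completing the exclusion of both types and yielding $s_{q^4/q}(3,2)=5$.

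The main obstacle is the second type. Its normalization rests on the uniqueness of rank-$3$ scattered linear sets on a line, and the choice of uncovered point is more delicate than in the first type, since the naive candidate $\la(0,0,1)\ra$ now belongs to $L_U$. The decisive idea is to align the free third coordinate with the Frobenius pattern by choosing $P=\la(\alpha,\alpha^q,1)\ra$, so that the projection lands inside the canonical scattered set $\{(w,w^q,0)\}$; injectivity then reduces to the single structural fact that $Z$ is a proper $\fq$-subspace and $\alpha\notin Z$. The same projection analysis also pinpoints the minimal examples, which are the rank-$5$ scattered (blocking) linear sets of Example \ref{ex:scattm+1}.
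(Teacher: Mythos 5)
Your proposal is correct and shares the paper's skeleton: Lemmas \ref{lem:step0} and \ref{lem:step1} together with Corollary \ref{cor:inslinGab} reduce everything to two canonical forms, after which one exhibits a point through which no secant line passes. But at the decisive step your route genuinely differs from the paper's, and the difference is not cosmetic. For the second form \eqref{eq:secondform}, $U=\{(x,x^q,tu')\colon x\in Z,\ t\in\fq\}$ with $u'\neq 0$, the paper reuses the same point $\la(0,0,1)\ra_{\F_{q^4}}$ as in the first case and asserts that no secant passes through it. In this case, however, that point lies in $L_U$ (take $x=0$, $t\neq 0$), and secant lines through it do exist: for $x\in Z\setminus\{0\}$ the line joining the distinct points $\la(x,x^q,0)\ra_{\F_{q^4}}$ and $\la(x,x^q,u')\ra_{\F_{q^4}}$ of $L_U$ contains $\la(0,0,1)\ra_{\F_{q^4}}$. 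The paper's deduction of a relation $\alpha(x,x^q)+\beta(y,y^q)=(0,0)$ with $x,y$ not $\fq$-proportional tacitly assumes that distinct points of $L_U$ on a secant have non-proportional first coordinates, which fails exactly when they differ only in the last coordinate. Your point $P=\la(\alpha,\alpha^q,1)\ra_{\F_{q^4}}$ with $\alpha\in\F_{q^4}\setminus Z$ avoids this: $P\notin L_U$ (otherwise $\alpha\in Z$, since $a\in\fq$); the projection from $P$ onto $z=0$ sends $\la(x,x^q,a)\ra_{\F_{q^4}}$ to $\la(w,w^q,0)\ra_{\F_{q^4}}$ with $w=x-a\alpha\neq 0$; and equal images give $y-\nu x=(b-\nu a)\alpha$ with $\nu\in\fq^*$, forcing $b=\nu a$ and $y=\nu x$ because $y-\nu x\in Z$ while $\alpha\notin Z$. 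Hence the projection is injective on $L_U$ and no secant passes through $P$. So your argument is not merely an alternative: it supplies a correct treatment of a case where the point chosen in the paper's proof does not work.

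Two minor remarks. First, your normalization to $U=\{(x,x^q,a)\colon x\in Z,\ a\in\fq\}$ implicitly uses, beyond the $\mathrm{GL}(2,q^4)$-equivalence on $W$, a shear fixing $W$ pointwise that sends $u$ to $(0,0,u_3)$ together with a scaling of the last coordinate; this is routine, and the paper is equally terse. Second, like the paper you take the lower bound $4$ from \eqref{eq:boundsons}, which for $q=2$ only yields $3$; strictly speaking rank $3$ must then also be excluded, which your projection argument handles verbatim: a spanning rank-$3$ linear set has all lines of weight at most $2$, so Corollary \ref{cor:inslinGab} puts $U$ in the form $\{(x,x^q,x^{q^2})\colon x\in Z\}$ with $\dim_{\fq}Z=3$, and $\la(0,0,1)\ra_{\F_{q^4}}$ again lies on no secant. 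Finally, your closing claim that the analysis ``pinpoints the minimal examples'' as the scattered rank-$5$ sets overstates matters, since by Lemma \ref{lem:sizerank5} every rank-$5$ linear set other than a line is $1$-saturating; but that is outside the statement being proved.
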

\begin{proof}
    Suppose that $L_U$ is a $1$-saturating $\fq$-linear set of rank $4$. We will first show that, based on Lemma \ref{lem:step1}, we have up to equivalence two examples to exclude in order to prove the non-existence of such a linear set.
    By Lemma \ref{lem:step1}, $L_U$ is either scattered with respect to the lines or scattered having all the lines of weight at most two except one of weight three.
    In the former case, by Corollary \ref{cor:inslinGab}, we obtain that, up to GL$(2,q^4)$-equivalence, 
    \begin{equation}\label{eq:firstform}
    U=\{ (x,x^q,x^{q^2}) \colon x \in \F_{q^4} \}.
    \end{equation}
    Suppose now that $L_U$ is scattered having all the lines of weight at most two except one of weight three, namely $r=\mathrm{PG}(W,\F_{q^4})$.
    We may assume that $V=\F_{q^m}^3$, $r$ is the line of the points having zero as last coordinate and 
    \[ U=(W\cap U)\oplus \la u \ra_{\fq}, \]
    where $u \in U \setminus W$ and $\dim_{\fq}(W\cap U)=3$.
    Since $L_{W\cap U}$ is an $\fq$-linear set of rank three contained in $r$, by Corollary \ref{cor:inslinGab}, $W\cap U$ is GL$(2,q^4)$-equivalent to
    \[ U'=\{ (x,x^q,0) \colon x \in Z \}, \]
    for some $\fq$-subspace $Z$ of $\F_{q^4}$ of dimension three.
    Extending the action of this map to the ambient space $V$ we obtain
    \begin{equation}\label{eq:secondform}
    U=\{ (x,x^q, t u') \colon x \in Z, t \in \fq \},
    \end{equation}
    for some $u' \in \F_{q^4}$.
    Now, either in case of \eqref{eq:firstform} or in case of \eqref{eq:secondform}, we get that through the point $\la (0,0,1)\ra_{\F_{q^4}}$ does not pass any secant line to $L_U$. Indeed, if such a line exists there are $x, y \in \F_{q^4}$ (or in $Z$) not $\fq$-proportional and $\alpha,\beta \in \F_{q^4}^*$ such that
    \[ \alpha (x,x^q)+\beta (y,y^q)=(0,0), \]
    from which we get a contradiction as $x^q$ is a scattered polynomial.
    Therefore, we have proved that $s_{q^4/q}(3,2)>4$. The assertion now follows by Example \ref{ex:scattm+1}.
\end{proof}

In order to characterize the linear sets of rank $5$ that are $1$-saturating in PG$(2,q^4)$ we recall the following. 
The $\fq$-linear sets of rank $5$ in PG$(2,q^4)$ are \emph{blocking sets} in PG$(2,q^4)$, that is they meet any line in at least one point. Starting from the paper of Polito and Polverino \cite{PoPo}, such a linear sets have been classified in \cite{BoPol} by Bonoli and Polverino, and an explicit description of those of R\'edei type (those having a line of weight four) has been given in \cite{BMZZ,Complwei}.

To our aim we just need the following, which is a consequence of the results in \cite{BoPol}.

\begin{lem}\label{lem:sizerank5}
    Let $L_U$ be an $\fq$-linear set of rank $5$ in PG$(2,q^4)$. Then one of the following occurs
    \begin{itemize}
        \item $|L_U|=q^4+1$ and $L_U$ is a line;
        \item $|L_U|=q^4+q^2+1$ and $L_U$ is a Baer subplane;
        \item $|L_U|=q^4+q^3+1$;
        \item $|L_U|=q^4+q^3+q^2+1$;
        \item $|L_U|=q^4+q^3+q^2-q+1$;
        \item $|L_U|=q^4+q^3+q^2+q+1$.
    \end{itemize}
    In particular, if $L_U$ is not a line then $|L_U|>q^4+1$.
\end{lem}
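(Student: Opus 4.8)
The plan is to deduce the statement from the classification of $\fq$-linear blocking sets of $\PG(2,q^4)$ in \cite{BoPol}, after first pinning down the coarse structure of $L_U$ by elementary dimension counts. I would begin by observing that any such $L_U$ is automatically a blocking set: for a line $\ell=\PG(W,\F_{q^4})$ we have $\dim_{\fq}W=8$ and $\dim_{\fq}\F_{q^4}^3=12$, so Grassmann's identity gives $\dim_{\fq}(U\cap W)\ge 5+8-12=1$ and $\ell$ meets $L_U$. The same count applied to the $4$-dimensional $\fq$-space of a point shows that if $U$ is contained in some line-space $W$, then every point of that line lies in $L_U$; thus either $L_U$ fills a line, giving $|L_U|=q^4+1$, or $L_U$ spans the plane and no line has weight $5$, i.e.\ every line has weight at most $4$.

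Assume now that $L_U$ spans the plane and that some line $r=\PG(W,\F_{q^4})$ has weight $4$ (the \emph{R\'edei} case). Writing $U=(W\cap U)\oplus\langle u\rangle_{\fq}$ with $u\notin W$, the span condition forces $L_{W\cap U}$ to be a rank-$4$ linear set covering the whole line $r\cong\PG(1,q^4)$, and since $\langle u\rangle_{\F_{q^4}}\cap W=\{0\}$ Lemma \ref{lem:extensionpoint} yields $|L_U|=|L_{W\cap U}|+q^4$. Hence I would reduce the problem to listing the cardinalities of a spanning rank-$4$ $\fq$-linear set on $\PG(1,q^4)$. Using the weight relations \eqref{eq:pesicard}--\eqref{eq:pesivett} together with Theorem \ref{thm:sizelinset}, these cardinalities are $q^2+1$ (a Baer subline, all points of weight two), $q^3+1$ (a club), $q^3+q^2-q+1$, $q^3+q^2+1$ and $q^3+q^2+q+1$ (scattered); adding $q^4$ reproduces the five non-line values in the statement, the Baer-subline case producing the Baer subplane $q^4+q^2+1$ and the scattered case producing $q^4+q^3+q^2+q+1$.

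It remains to treat the spanning configurations with maximal line weight at most $3$; here I would invoke \cite{BoPol} (building on \cite{PoPo}) directly to conclude that the only cardinalities occurring are again among the six listed, the point-scattered case giving $\frac{q^5-1}{q-1}=q^4+q^3+q^2+q+1$ by \eqref{eq:pesicard}. I expect the genuine obstacle to lie exactly here: ruling out every other weight distribution is the substantive classification of \cite{BoPol}, whereas computing the cardinality once a distribution is fixed is routine. Granting this, the final assertion is immediate by inspection, since $q^4+1$ is the only value in the list that does not exceed $q^4+1$ and, by the first step, it is attained precisely when $L_U$ is a line; equivalently, a nontrivial (spanning) blocking set of $\PG(2,q^4)$ has at least $q^4+q^2+1$ points, so a non-line $L_U$ satisfies $|L_U|\ge q^4+q^2+1>q^4+1$.
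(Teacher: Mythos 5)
Your overall strategy coincides with the paper's: the paper gives no self-contained proof of this lemma, stating it simply as a consequence of the classification of $\fq$-linear blocking sets of $\PG(2,q^4)$ in \cite{BoPol}, and your proposal likewise rests on that classification in the end. Your preliminary reductions are sound: the Grassmann count $\dim_{\fq}(U\cap W)\geq 5+8-12=1$ showing $L_U$ blocks every line, the observation that $U\subseteq W$ forces $L_U$ to be the full line, and the application of Lemma \ref{lem:extensionpoint} giving $\lvert L_U\rvert=\lvert L_{W\cap U}\rvert+q^4$ in the R\'edei case (correctly, since $\langle u\rangle_{\F_{q^4}}\cap W=\{0\}$ because $W$ is $\F_{q^4}$-linear and $u\notin W$). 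One slip of wording: $L_{W\cap U}$ cannot \emph{cover} the line $r$, as a rank-$4$ linear set has at most $q^3+q^2+q+1<q^4+1$ points; what your argument needs, and what the span condition actually gives, is that $W\cap U$ spans $W$ over $\F_{q^4}$, i.e.\ that $L_{W\cap U}$ is not a single point.

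The genuine gap is in the R\'edei paragraph: the list of cardinalities of spanning rank-$4$ linear sets on $\PG(1,q^4)$ does \emph{not} follow from \eqref{eq:pesicard}--\eqref{eq:pesivett} together with Theorem \ref{thm:sizelinset}. Those tools give only the following: by \eqref{eq:wpointsrank} a point of weight $3$ forces all other points to have weight $1$, yielding the club of size $q^3+1$; otherwise, with $t$ points of weight $2$, one gets $N_1=q^3+q^2+q+1-t(q+1)$ and $\lvert L_{W\cap U}\rvert=q^3+q^2+q+1-tq$, and Theorem \ref{thm:sizelinset} (together with the observation that $N_1=0$ forces $t=q^2+1$) restricts $t$ only to $t\leq q+1$ or $t=q^2+1$. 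Nothing you cite excludes $t\in\{3,\ldots,q\}$: for instance $t=3$ gives $\lvert L_{W\cap U}\rvert=q^3+q^2-2q+1$, which satisfies all the weight identities and exceeds the minimum-size bound $q^3+1$ for every $q$, yet does not appear in your list; so for $q\geq 3$ the claimed derivation is incomplete (for $q=2$ the range $\{3,\ldots,q\}$ is empty, which may be why the gap is easy to miss). Ruling out these intermediate weight distributions is precisely the substantive content of the classification --- it is contained in \cite{BoPol} through their analysis of R\'edei-type blocking sets --- so the repair is simply to invoke \cite{BoPol} for the R\'edei-line spectrum as well, exactly as you already do for the configurations with maximal line weight $3$. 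The same remark applies to the structural assertions of the lemma (that size $q^4+1$ occurs only for a line you do prove; that size $q^4+q^2+1$ forces a Baer subplane again requires the classification, not just the size count).
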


As a consequence we get that all the linear sets of rank $5$, different from a line, are $1$-saturating.

\begin{cor}
    Let $L_U$ be an $\fq$-linear set of rank $5$ in $\mathrm{PG}(2,q^4)$. Then it is $1$-saturating linear set if and only if $L_U$ is not a line.
\end{cor}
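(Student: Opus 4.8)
The plan is to prove the two implications separately, with essentially all of the content carried by the size dichotomy of Lemma~\ref{lem:sizerank5} together with an elementary count of the lines through a fixed point.

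For the (only if) direction I would argue the contrapositive. If $L_U$ is a line $\ell$, then any two distinct points of $L_U$ already span $\ell$, so $\ell$ is the \emph{unique} secant line to $L_U$; consequently no secant line passes through any point outside $\ell$, and $L_U$ fails to be $1$-saturating. This is exactly the reasoning used in Lemma~\ref{lem:step0}, so nothing new is required here.

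For the (if) direction, suppose $L_U$ is not a line. By the final assertion of Lemma~\ref{lem:sizerank5} we then have $|L_U| > q^4 + 1$. I would fix an arbitrary point $P \in \PG(2,q^4)$ and produce a secant line through it. The number of lines through $P$ is exactly $q^4 + 1$. Assuming for contradiction that none of them is secant, each meets $L_U$ in at most one point; since every point of $L_U$ distinct from $P$ lies on a unique line through $P$, summing the contributions of these $q^4+1$ lines forces $|L_U| \le q^4 + 1$, contradicting $|L_U| > q^4 + 1$. Hence some line through $P$ is secant, and as $P$ was arbitrary, $L_U$ is $1$-saturating. Combining the two directions yields the stated equivalence.

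I expect the argument to be short, and the only step needing a little care is the bookkeeping in the count when $P \in L_U$: in that case $P$ lies on all $q^4+1$ lines through it, so the hypothesis ``at most one point of $L_U$ per line'' forces that single point to be $P$ itself on every such line, whence $L_U = \{P\}$, again contradicting $|L_U| > q^4 + 1$. This mild case distinction is the main (and essentially only) obstacle; once it is dispatched, the counting bound applies uniformly and the proof concludes.
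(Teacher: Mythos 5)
Your proposal is correct and follows essentially the same route as the paper: the (only if) direction is exactly Lemma~\ref{lem:step0}, and the (if) direction combines the size bound $|L_U|>q^4+1$ from Lemma~\ref{lem:sizerank5} with the pigeonhole count over the $q^4+1$ lines through a point, which is precisely the argument the paper invokes via Example~\ref{ex:scattm+1}. Your explicit handling of the case $P\in L_U$ is a harmless refinement of bookkeeping that the paper leaves implicit.
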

\begin{proof}
    If $L_U$ is $1$-saturating linear set then it cannot be contained in a line by Lemma \ref{lem:step0}. Suppose now that $L_U$ is not a line, then Lemma \ref{lem:sizerank5} implies that $|L_U|>q^4+1$. Arguing now as in the Example \ref{ex:scattm+1}, the number of lines trough any point of the plane is $q^4+1$, so that for any point there exists at least one secant line to $L_U$ passing through it. Therefore $L_U$ is a $1$-saturating linear set.
\end{proof}

A complete list, up to equivalence, of these linear sets is given in \cite[Section 5]{BoPol}.

\section*{Acknowledgements}

The author is very grateful to Daniele Bartoli, Martino Borello and Giuseppe Marino for fruitful discussions.
The research was supported by the project ``COMBINE'' of the University of Campania ``Luigi Vanvitelli'' and was partially supported by the Italian National Group for Algebraic and Geometric Structures and their Applications (GNSAGA - INdAM).
This research was also supported by Bando Galileo 2024 – G24-216 and by the project ``The combinatorics of minimal codes and security aspects'', Bando Cassini.

Ferdinando Zullo\\
Dipartimento di Matematica e Fisica,\\
Universit\`a degli Studi della Campania ``Luigi Vanvitelli'',\\
Viale Lincoln 5,\\
81100 Caserta CE - Italy\\
{{\em ferdinando.zullo@unicampania.it}}

\end{document}